\documentclass[12pt,reqno]{amsart}
\usepackage{amsmath,amssymb,amsthm,mathtools}
\IfFileExists{stmaryrd.sty}{\usepackage{stmaryrd}}{%
  \providecommand{\llbracket}{\mathopen{[\![}}%
  \providecommand{\rrbracket}{\mathclose{]\!]}}%
}
\usepackage{geometry}
\usepackage{enumitem,xcolor}
\usepackage[backend=biber,style=numeric,sorting=nyt]{biblatex}
\usepackage{hyperref}
\hypersetup{colorlinks=true,linkcolor=blue,citecolor=blue,urlcolor=blue,
 pdfauthor={Xianfeng JIANG},
 pdftitle={Rigidity of Euclidean Convex Fillings for Ordinary Ambrosio--Kirchheim Mass}}
\usepackage[capitalise,nameinlink,noabbrev]{cleveref}
\allowdisplaybreaks
\numberwithin{equation}{section}
\theoremstyle{plain}
\newtheorem{theorem}{Theorem}[section]
\newtheorem{lemma}[theorem]{Lemma}
\newtheorem{proposition}[theorem]{Proposition}
\newtheorem{corollary}[theorem]{Corollary}
\theoremstyle{remark}

\newcommand{\R}{\mathbb R}
\newcommand{\Lip}{\operatorname{Lip}}
\newcommand{\spt}{\operatorname{spt}}
\newcommand{\set}{\operatorname{set}}
\newcommand{\diam}{\operatorname{diam}}
\newcommand{\id}{\operatorname{id}}
\newcommand{\I}{\mathbf I}
\newcommand{\N}{\mathbf N}
\newcommand{\M}{\mathbf M}
\newcommand{\Leb}{\mathcal L}
\newcommand{\Haus}{\mathcal H}
\newcommand{\Cont}{\mathcal C}
\newcommand{\cur}[1]{\llbracket #1\rrbracket}
\newcommand{\byref}[2]{\mathrel{\overset{\text{\scriptsize\ref{#1}}}{#2}}}
\title[Ordinary AK mass filling rigidity]{Rigidity of Euclidean Convex Fillings for Ordinary Ambrosio--Kirchheim Mass}
\author{Xianfeng JIANG}
\address{Laboratoire mathématique d'Orsay, Université Paris-Saclay,
91405 Orsay, France}
\email{xianfeng.jiang@mail.ustc.edu.cn}
\date{}
\subjclass[2020]{49Q15, 53C23}
\keywords{Metric currents, Ambrosio--Kirchheim mass, minimal fillings,
convex bodies, Lipschitz-volume rigidity}
\begin{document}
\begin{abstract}
We prove rigidity for compact integral current fillings with respect to
ordinary Ambrosio--Kirchheim mass. Let $C\subset\R^n$ be a compact convex
body with nonempty interior, $n\ge1$, and let $T\in\I_n(X)$, where
$X=\spt T$ is compact. Suppose $\iota:\partial C\to X$ is an isometric
embedding for the Euclidean chord metric and
\[
 \partial T=\iota_\#\cur{\partial C},\qquad \M(T)=\Leb^n(C).
\]
Then $\iota$ extends to a surjective isometry $I:C\to X$ satisfying
$I_\#\cur C=T$. No map from the filling to the convex body is assumed.
\end{abstract}
\maketitle
\pagestyle{plain}

\section{Introduction}\label{sec:intro}
Gromov's filling volume theory \cite[Section~2]{Gromov} asks how the
distances on a boundary constrain the volume of a space spanning it.
Equality cases connect minimal fillings with boundary rigidity.
Burago--Ivanov \cite{BI} proved minimal filling and boundary rigidity
results for Riemannian metrics sufficiently close to a Euclidean metric.

The theory of metric currents of Ambrosio--Kirchheim \cite{AK} and
the integral current spaces of Sormani--Wenger \cite{SW} provide a
setting for fillings beyond smooth manifolds. In this setting, the
choice of volume is essential. For integer-rectifiable currents,
ordinary AK mass $\M$ and inscribed Riemannian mass $\M^{\mathrm{ir}}$
satisfy
\begin{equation}\label{eq:mass-distinction}
 \M(S)\le\M^{\mathrm{ir}}(S).
\end{equation}
They agree on Riemannian tangent norms but can differ on other normed
tangents \cite[Sections~2.3--2.5]{BCS}. Hence equality for $\M$ does
not imply equality for $\M^{\mathrm{ir}}$. Basso \cite{Basso} relates modified current mass axioms to Finsler
volume functionals.

Basso--Creutz--Soultanis \cite[Theorem~1.1]{BCS} proved filling
equality rigidity of Euclidean convex bodies for
$\M^{\mathrm{ir}}$. Their Lemma~6.1 gives the ordinary AK bound
$\M(T)\ge\Leb^n(C)$, and their Introduction asks whether its equality
case is rigid. Their ordinary-mass theorem
\cite[Theorem~1.2]{BCS} answers this under a prescribed
$1$-Lipschitz map to Euclidean space and boundary-current and mass
conditions. Del Nin--Perales \cite[Theorem~1.1]{DNP} prove a related
result for Euclidean balls with an injective boundary map.
Z\"ust \cite[Theorem~1.2]{Zust} treats infinitesimally Euclidean
Lipschitz targets that are essential length spaces, again with a
prescribed map and current and mass conditions.

For closed metric surfaces, Meier--Ntalampekos
\cite[Theorem~1.1]{MN} prove that surjective $1$-Lipschitz maps to
closed Riemannian surfaces with the same Hausdorff area are
isometries. Basso--Marti--Wenger \cite[Theorem~1.4]{BMW} obtain
Lipschitz-volume rigidity for closed orientable metric manifolds under
linear local contractibility in dimensions greater than two. Marti's revised
theorem \cite[Theorem~1.2]{Marti} replaces linear local contractibility by
negligibility of porous sets for nonzero-degree maps in dimensions
at least three. These results start from a prescribed $1$-Lipschitz map
from the source space to the target; their source spaces and
Hausdorff-volume conditions also differ from the current
fillings considered here.

We prove ordinary AK mass equality rigidity for fillings with compact
support. From the boundary isometry and mass equality alone, we
construct the $1$-Lipschitz Euclidean map required by
\cite[Theorem~1.2]{BCS}. No manifold structure or infinitesimal
Euclidean assumption on the filling is imposed. The resulting
isometry extends the prescribed boundary embedding and preserves
the current.

Let $C\subset\R^n$ be a compact convex body with nonempty interior,
$n\ge1$. Write $L_C=\diam C$, and let $|\cdot|$ denote the Euclidean norm.
We equip $\partial C$ with the Euclidean chord distance $|z-z'|$.
The notation $\I_n(X)$ denotes integral $n$-currents in the sense of
Ambrosio--Kirchheim; $\cur C$ is integration over $C$ with the
standard orientation and $\cur{\partial C}=\partial\cur C$.
The definitions needed in the proof are recalled in \cref{sec:prelim}.

\begin{theorem}\label{thm:main}
Let $T\in\I_n(X)$, where $X=\spt T$ is compact. Suppose that
$\iota:(\partial C,|\cdot-\cdot|)\hookrightarrow(X,d)$ is an
isometric embedding and
\begin{equation}\label{eq:main-input}
 \partial T=\iota_\#\cur{\partial C},\qquad \M(T)=\Leb^n(C).
\end{equation}
Then there is a surjective isometry $I:C\to X$ such that
\begin{equation}\label{eq:main-output}
 I|_{\partial C}=\iota,\qquad I_\#\cur C=T.
\end{equation}
\end{theorem}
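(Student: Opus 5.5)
The plan is to build a $1$-Lipschitz map $\Phi:X\to\R^n$ extending $\iota^{-1}$ on $\iota(\partial C)$, and then invoke \cite[Theorem~1.2]{BCS}, whose hypotheses are such a map together with boundary-current and mass conditions.

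\textbf{Step 1 (construction of $\Phi$).} For each unit vector $v\in S^{n-1}$ let $h_C(v)=\max_{z\in C}\langle z,v\rangle$ be the support function. For $x\in X$ set
\[
 \phi_v(x)=\inf_{z\in\partial C}\bigl(\langle z,v\rangle+d(x,\iota(z))\bigr),
\]
a McShane-type $1$-Lipschitz extension of the linear function $\langle\cdot,v\rangle$ from the boundary. Because $\iota$ is isometric for chords and $|\langle z-z',v\rangle|\le|z-z'|$, we get $\phi_v\circ\iota=\langle\cdot,v\rangle$ on $\partial C$. We cannot simply take $\Phi=(\phi_{e_1},\dots,\phi_{e_n})$: this map is only $\sqrt n$-Lipschitz. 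Instead, I would define $\Phi(x)$ as the point of $C$ determined by the family $\{\phi_v(x)\}_v$, for instance $\Phi(x)=$ the unique point $p\in C$ with $\langle p,v\rangle\le\phi_v(x)$ for all $v$ in a suitably maximal sense. The cleanest route is to first show, using the mass equality, that $\phi_v(x)+\phi_{-v}(x)=0$ for every $x\in X$ and every $v$ (Step 2). Then $v\mapsto\phi_v(x)$ is linear, so $\phi_v(x)=\langle\Phi(x),v\rangle$ for a unique $\Phi(x)$. Moreover $|\Phi(x)-\Phi(y)|=\sup_v|\phi_v(x)-\phi_v(y)|\le d(x,y)$, so $\Phi$ is $1$-Lipschitz into $\R^n$.

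\textbf{Step 2 (the key estimate, main obstacle).} The triangle inequality gives $\phi_v+\phi_{-v}\ge0$ on $X$. To prove equality I would revisit the proof of \cite[Lemma~6.1]{BCS}, $\M(T)\ge\Leb^n(C)$. That argument slices $T$ by $1$-Lipschitz functions of the form $\phi_v$ (or by distance-type functions to boundary pieces) and bounds the mass via $\M(T)\ge\int\M(\langle T,\phi_v,t\rangle)\,dt$ combined with a filling bound on the slices, and integral-geometric averaging over $v$ yields $\Leb^n(C)$. In the equality case every inequality in that chain must be sharp for almost every $v$ and $t$. Suppose $\phi_v+\phi_{-v}>0$ on an open set $U\subset X=\spt T$. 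Then $\|T\|(U)>0$ since $U$ meets the support, and on $U$ the slicing functions $\phi_v$ and $-\phi_{-v}$ disagree. One could then use the larger of the two slicing bounds to push the mass estimate strictly above $\Leb^n(C)$, which is a contradiction. Making this strict-gain argument precise, via the coarea inequality for $\|T\|$ restricted to $U$ and continuity in $v$, is where I expect the real difficulty to lie. Since $\spt T=X$, the identity then holds on all of $X$ by continuity.

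\textbf{Step 3 (conclusion).} With $\Phi$ $1$-Lipschitz and $\Phi\circ\iota=\id_{\partial C}$, we have $\partial\Phi_\#T=\Phi_\#\iota_\#\cur{\partial C}=\cur{\partial C}$. By the constancy theorem, $\Phi_\#T=\cur C$, and $\M(T)=\Leb^n(C)=\M(\Phi_\#T)$. These are exactly the hypotheses of \cite[Theorem~1.2]{BCS}, which yields that $\Phi:X\to C$ is an isometry; surjectivity follows because $\Phi_\#T=\cur C$ forces $\Phi(X)\supset C$. Set $I=\Phi^{-1}$. Then $I|_{\partial C}=\iota$, and $I_\#\cur C=I_\#\Phi_\#T=T$, which gives \eqref{eq:main-output}.
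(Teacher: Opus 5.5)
Your Step 1 has a genuine gap. From $\phi_v+\phi_{-v}=0$ you conclude that $v\mapsto\phi_v(x)$ is linear, but this identity only says that the map is \emph{odd} on $S^{n-1}$. For $n\ge2$, odd functions on the sphere need not be restrictions of linear functionals.

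Since $\phi_{-v}=-E_v^-$ with $E_v^-(x)=\sup_z\bigl(\langle z,v\rangle-d(x,\iota(z))\bigr)$, your Step 2 identity says exactly that $E_v^+=E_v^-$. That is, the $1$-Lipschitz extension of $z\mapsto v\cdot z$ is unique, which is the paper's Lemma~\ref{lem:unique}. Uniqueness by itself gives no relation between different directions. The natural candidate $\cos\theta\,\phi_v+\sin\theta\,\phi_w$ has the correct boundary values $v_\theta\cdot z$, but a priori it is only $(|\cos\theta|+|\sin\theta|)$-Lipschitz, so uniqueness cannot be applied to it. Without linearity, $\Phi(x)$ is not even defined: a point $p$ with $\langle p,v\rangle=\phi_v(x)$ for all $v$ exists iff $v\mapsto\phi_v(x)$ is linear. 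The coordinate map then remains only $\sqrt n$-Lipschitz, which is precisely the obstacle you flagged but did not overcome.

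The paper closes this gap with a quantitative compatibility argument.
\begin{enumerate}[label=(\roman*)]
\item Slice $T$ by $P_v=(u_{b_1},\dots,u_{b_{n-1}})$ with $b_1=w$. Each slice boundary is $\pm\bigl(\cur{\iota(z_y^+)}-\cur{\iota(z_y^-)}\bigr)$ by naturality, so each slice has mass at least $\ell_y$.
\item Cavalieri gives $\int\ell_y\,dy=\Leb^n(C)$, and the sharp slicing inequality bounds the integrated slice masses by $\M(T)$. Mass equality therefore forces almost every slice to be an isometric curve $\gamma_y$, and $\|T\|=\int\|S_y\|\,dy$.
\item Along $\gamma_y$, $u_v$ grows at unit speed and $u_w$ is constant. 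Squeezing $u_{v_\theta}$ between its two endpoint Lipschitz bounds gives $\|u_{v_\theta}-\cos\theta\,u_v-\sin\theta\,u_w\|_\infty\le L_C(1-\cos\theta)$, an error quadratic in $\theta$.
\item Compose $N$ rotations by $\theta/N$, handling the second component via oddness. The total error is at most $L_C\theta^2/(\sqrt2N)\to0$, which gives exact rotation compatibility and hence linearity.
\end{enumerate}
The same transverse $(n-1)$-slicing structure is also what makes your Step 2 rigorous. Your sketch via level sets of a single $\phi_v$ and a ``strict gain'' on an open set is not a proof as written.

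Two smaller points, both easy to fill. In Step 3 you must also verify $\M(\partial T)\le\Haus^{n-1}(\partial C)$, which holds because $\iota$ is isometric. You also need the integral current space $(\set T,T)$ with completion $X$ in order to invoke \cite[Theorem~1.2]{BCS}. For $n=1$ your route is fine, since oddness on $S^0$ is linearity.
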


In \cref{sec:chords,sec:scalar,sec:proof}, the data $C,X,T,\iota$
satisfy \cref{thm:main}, and $\mu=\|T\|$.

The proof combines scalar slicing \cite{AK,Lang}, the
one-dimensional decomposition of \cite{BDNP}, and McShane--Whitney
extensions \cite{McShane}. Mass equality identifies slices with
minimizing curves and forces uniqueness of scalar extensions.
A quadratic angular estimate yields exact rotational compatibility
by iteration. Its linearity corollary makes the coordinate map
$1$-Lipschitz, so \cite[Theorem~1.2]{BCS} applies.

\Cref{sec:prelim} recalls the current theory used in the proof.
\Cref{sec:chords,sec:scalar} construct the nonexpanding map.
\Cref{sec:proof} applies the rigidity theorem for a prescribed map and
treats $n=1$ separately.

\section{Preliminaries on metric currents and slicing}\label{sec:prelim}
We write $\Lip(f)$ for the Lipschitz constant of $f$ and consider
currents on complete metric spaces. Lang's finite-mass currents and
mass measures agree with the AK definitions on locally compact
supports \cite[(4.14)--(4.15) and the following paragraph]{Lang}.

Let $S$ be an $m$-dimensional metric current on a complete metric
space $Z$. It acts on tuples $(a,q_1,\ldots,q_m)$.
Here $a$ is bounded Lipschitz and each $q_j$ is Lipschitz.
The expression $S(a,q_1,\ldots,q_m)$ is
the metric counterpart of evaluating a classical current on
$a\,dq_1\wedge\cdots\wedge dq_m$. The action is multilinear and local.
Locality means that $S(a,q_1,\ldots,q_m)=0$ if some $q_j$ is
constant near $\{a\ne0\}$. The action is continuous when the $q_j$
converge pointwise with uniformly bounded Lipschitz constants.
The finite-mass axiom is
\begin{equation}\label{eq:mass-axiom}
 |S(a,q_1,\ldots,q_m)|
 \le\prod_{j=1}^m\Lip(q_j)\int |a|\,d\|S\|.
\end{equation}
Here $\|S\|$ is the least Borel measure for which this estimate holds,
and $\M(S)=\|S\|(Z)$ is the ordinary AK mass. By
\eqref{eq:mass-axiom}, the action of $S$ in its first argument $a$
extends continuously to $L^1(\|S\|)$. In particular, bounded Borel
functions can be used in this argument \cite[Section~3, Theorem~3.5]{AK}.
The support is the closed set
\[
 \spt S=\spt\|S\|
 =\{x:\|S\|(B_r(x))>0\text{ for every }r>0\}.
\]
We write $\N_m(Z)$ for normal currents: these have finite mass and
finite boundary mass. Integral currents, denoted by $\I_m(Z)$,
are the integer-rectifiable normal currents. An integer-rectifiable
current is a countable sum of pushforwards of Euclidean currents under
bi-Lipschitz maps from compact subsets of $\R^m$ into $Z$
\cite[Theorem~4.5]{AK}. Each piece has an integrable integer-valued
multiplicity on its parameter domain. The chart images cover
$\|S\|$-almost all of $Z$.
Normality alone does not imply integrality.

For $m\ge1$, the positive-density set is
\begin{equation}\label{eq:canonical-set}
 \set S=\left\{x:
 \liminf_{r\downarrow0}\frac{\|S\|(B_r(x))}{r^m}>0\right\}.
\end{equation}
For an integral current, $\|S\|$ is concentrated on $\set S$
\cite[Theorem~4.6]{AK}; this set need not be closed.
An integral current space is the pair $(Y,S)$ with $Y=\set S$ and
$S$ an integral current on the completion of $Y$; compare
\cite[Definition~2.46]{SW}.

For $m\ge1$, the boundary is the $(m-1)$-current
\[
 \partial S(a,q_1,\ldots,q_{m-1})=S(1,a,q_1,\ldots,q_{m-1}).
\]
For a Lipschitz map $f:Z\to W$, pushforward preserves dimension and is
defined by
\[
 f_\#S(a,q_1,\ldots,q_m)
 =S(a\circ f,q_1\circ f,\ldots,q_m\circ f).
\]
These definitions give
\begin{equation}\label{eq:push-boundary}
 \partial(f_\#S)=f_\#(\partial S),\qquad
 \|f_\#S\|\le\Lip(f)^m f_\#\|S\|.
\end{equation}
If $f$ is an isometric embedding, then
$\|f_\#S\|=f_\#\|S\|$. Apply \eqref{eq:push-boundary} to $f$
and to its inverse on $f(Z)$. For the inverse estimate, every
real-valued Lipschitz test on $f(Z)$ extends to $W$ with the same
Lipschitz constant \cite{McShane}.

For a Borel set $A$, restriction $S\llcorner A$ means multiplying
the first argument by $\mathbf1_A$; it has dimension $m$ and need not
be normal. More generally, contraction by $k\le m$ scalar functions means
\begin{equation}\label{eq:contraction}
 \begin{aligned}
 R&=S\llcorner(1,p_1,\ldots,p_k),\\
 R(a,q_1,\ldots,q_{m-k})
   &=S(a,p_1,\ldots,p_k,q_1,\ldots,q_{m-k}).
 \end{aligned}
\end{equation}
It has dimension $m-k$. These operations and the extension in the
first argument are as in \cite[Sections~2--3]{AK}.

A zero-dimensional integral current of finite mass is a finite sum
\[
 Q=\sum_{j=1}^N m_j\cur{x_j},\qquad
 Q(a)=\sum_{j=1}^N m_j a(x_j),\qquad
 \M(Q)=\sum_{j=1}^N|m_j|,
\]
with distinct $x_j$ and nonzero integers $m_j$. In particular, two
different unit atoms cannot cancel. For a Lipschitz curve
$\gamma:[0,L]\to X$, its current is
\begin{equation}\label{eq:curve-current}
 \cur\gamma(a,q)=\int_0^L a(\gamma(t))(q\circ\gamma)'(t)\,dt,
 \qquad \partial\cur\gamma=\cur{\gamma(L)}-\cur{\gamma(0)}.
\end{equation}
Always $\M(\cur\gamma)\le\operatorname{length}(\gamma)$.
For an injective arclength-parametrized curve, equality holds and
$\|\cur\gamma\|=\gamma_\#(\Leb^1\llcorner[0,L])$.
These are the one-dimensional area and boundary formulas; see
\cite[Section~2.3 and Lemma~5.4]{BDNP}. A minimizing curve is a curve whose length
equals the distance between its endpoints. Its arclength
parametrization is isometric, as we verify in \cref{lem:two-endpoints}.

The Euclidean current of the oriented convex body $C$ is
\begin{equation}\label{eq:euclidean-density}
 \cur C(a,q_1,\ldots,q_n)
 =\int_C a(x)\det D(q_1,\ldots,q_n)(x)\,dx.
\end{equation}
A convex body with nonempty interior is a compact Lipschitz manifold
with boundary. Thus
\begin{equation}\label{eq:convex-current}
 \cur C\in\I_n(\R^n),\qquad
 \partial\cur C=\cur{\partial C},\qquad
 \M(\cur C)=\Leb^n(C),\quad\M(\cur{\partial C})=\Haus^{n-1}(\partial C);
\end{equation}
see \cite[Example~2.5 and the paragraph following it]{BCS}.
Hausdorff measure is normalized
to agree with Lebesgue measure on Euclidean planes, and $\Haus^0$
counts points. Boundary orientation is the one determined by
$\partial\cur C$.

For a normal $m$-current $S$ and a Lipschitz map
$P=(p_1,\ldots,p_k):Z\to\R^k$, $1\le k\le m$,
the slice $\langle S,P,y\rangle$ has dimension $m-k$ and exists
for almost every $y$. For one scalar coordinate our convention is
\[
 \langle S,p,t\rangle
 :=\partial(S\llcorner\{p<t\})-(\partial S)\llcorner\{p<t\}
 \quad\text{for a.e. }t.
\]
For $P=(p_1,\ldots,p_k)$ and $y=(y_1,\ldots,y_k)$, we iterate
in this coordinate order:
\[
 \langle S,P,y\rangle
 :=\bigl\langle\cdots
   \bigl\langle\langle S,p_1,y_1\rangle,p_2,y_2\bigr\rangle
   \cdots,p_k,y_k\bigr\rangle.
\]
By \cite[(6.1), (6.6) and Theorem~6.5]{Lang}, these currents
coincide with the slices of \cite[Definition~6.3]{Lang}, as
$(m-k)$-dimensional currents, for $\Leb^k$-almost every $y\in\R^k$.
The properties we use are
\begin{align}
 \spt\langle S,P,y\rangle
   &\subset \spt S\cap P^{-1}(y),\label{eq:slice-support}\\
 \partial\langle S,P,y\rangle
   &=(-1)^k\langle\partial S,P,y\rangle\quad(k<m),
       \label{eq:slice-boundary}\\
 f_\#\langle S,P\circ f,y\rangle
   &=\langle f_\#S,P,y\rangle,
       \label{eq:slice-natural}\\
 \int_{\R^k}\|\langle S,P,y\rangle\|(A)\,dy
   &=\|S\llcorner(1,p_1,\ldots,p_k)\|(A).
       \label{eq:slice-exact}
\end{align}
In \eqref{eq:slice-natural}, $f:Z\to W$ and $P:W\to\R^k$. For the support and boundary identities \eqref{eq:slice-support} and \eqref{eq:slice-boundary}, see \cite[(6.8)--(6.9)]{Lang}. Naturality \eqref{eq:slice-natural} is stated in \cite[(2.12)]{BCS}. The mass-measure equality \eqref{eq:slice-exact} is \cite[Theorem~6.4(3)]{Lang}, for every Borel set $A$, on locally compact spaces with separable $\spt S$. Local normality follows from \cite[Theorem~6.4(1)]{Lang}. For normal $S$ and Lipschitz $P$, the slices exist and are normal almost everywhere if $S$ and $\partial S$ are concentrated on a common $\sigma$-compact set \cite[Theorem~5.6]{AK}, as is the case here. For integral $S$, \cite[Theorem~5.7]{AK} also gives integer-rectifiable slices, so the slices are integral almost everywhere.

We also use the coefficient form of the slicing identity: for a bounded Borel function $h$ on $\R^k$,
\begin{equation}\label{eq:slice-disintegration}
 \int h(y)\langle S,P,y\rangle(a,q)\,dy
   =S(h(P)a,p_1,\ldots,p_k,q).
\end{equation}
Here $q$ stands for the remaining $m-k$ scalar tests. For almost every $y$, \eqref{eq:slice-support} gives
\[
 \langle S,P,y\rangle(h(P)a,q)=h(y)\langle S,P,y\rangle(a,q),
\]
and \cite[Theorem~6.4(2)]{Lang}, applied to $h(P)a$, gives \eqref{eq:slice-disintegration}. The local versions suffice for Euclidean currents with compact support.

For integer-rectifiable $S$, a family of finite-mass $(m-k)$-currents is characterized almost everywhere by \eqref{eq:slice-disintegration} for all test tuples if its test
evaluations are measurable, its masses are integrable, and it is
concentrated on the corresponding fibers inside a common
$\sigma$-compact subset of $Z$. It suffices to use $h\in C_c(\R^k)$
\cite[Section~2.6, after (2.11)]{BCS}.

We use the following rigidity theorem of Basso--Creutz--Soultanis
\cite[Theorem~1.2, p.~214]{BCS}, stated with ordinary AK mass.
\begin{theorem}\label{thm:given-map}
Let $(Y,S)$ be an $n$-dimensional integral current space, let
$C\subset\R^n$ be a convex body, and let $f:Y\to\R^n$ be
$1$-Lipschitz. Suppose
\[
 f_\#\partial S=\cur{\partial C},\qquad
 \M(S)\le \Leb^n(C),\qquad \M(\partial S)\le \Haus^{n-1}(\partial C).
\]
Then $f$ is an isometry onto $C$.
\end{theorem}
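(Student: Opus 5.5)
Since \cref{thm:given-map} is quoted from \cite{BCS}, I would reprove it by reducing everything to the Euclidean current $\cur C$ and then transporting one-dimensional information back along slices. The first step is to identify the pushforward. The current $f_\#S$ is an integral $n$-current in $\R^n$ with compact support and, by \eqref{eq:push-boundary}, with boundary $f_\#\partial S=\cur{\partial C}=\partial\cur C$. The difference $f_\#S-\cur C$ is then a top-dimensional cycle in $\R^n$ with compact support. By the constancy theorem it vanishes, so $f_\#S=\cur C$. Combining \eqref{eq:push-boundary} with $\Lip f\le1$ and \eqref{eq:convex-current} gives
\[
 \Leb^n(C)=\M(\cur C)\le\M(S)\le\Leb^n(C).
\]
Hence $\|f_\#S\|=f_\#\|S\|=\Leb^n\llcorner C$, and the same argument yields $f_\#\|\partial S\|=\Haus^{n-1}\llcorner\partial C$. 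In particular $f$ carries $\|S\|$ onto Lebesgue measure without loss, and it has Jacobian one almost everywhere.

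The second step is to slice along lines. Fix a unit vector $v$ and let $P:\R^n\to\R^{n-1}$ be orthogonal projection onto $v^\perp$, written in coordinates. By \eqref{eq:slice-natural}, for a.e.\ $y$ we have
\[
 f_\#\langle S,P\circ f,y\rangle=\langle\cur C,P,y\rangle=\pm\cur{\sigma_y},
\]
where $\sigma_y=C\cap P^{-1}(y)$ is a chord. The slice-mass identity \eqref{eq:slice-exact}, together with the mass equality from the first step and the bound $\|S\llcorner(1,p_1,\ldots,p_{n-1})\|\le\|S\|$, forces equality fiberwise for a.e.\ $y$:
\[
 \M(\langle S,P\circ f,y\rangle)=|\sigma_y|.
\]
By \eqref{eq:slice-boundary}, the boundary of such a slice is the slice of $\partial S$, and this consists of two unit atoms over the endpoints of $\sigma_y$. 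The one-dimensional decomposition of \cite{BDNP} writes the integral $1$-current as a curve joining these atoms plus cycles, and the mass equality kills the cycles. So the slice is $\cur{\gamma_y}$ for a curve $\gamma_y$ of length $|\sigma_y|$ that $f$ maps isometrically onto $\sigma_y$. Consequently $d(x,x')=|f(x)-f(x')|$ for all $x,x'$ on $\gamma_y$.

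The third step, which I expect to be the main obstacle, is passing from a.e.\ lines to arbitrary pairs of points in $Y=\set S$. I would combine chords in many directions $v$. For generic $x,x'$, the points can be joined by a chain of such good curves whose Euclidean images approximate the segment $[f(x),f(x')]$. Projecting segments in different directions and using the triangle inequality then gives
\[
 d(x,x')\le|f(x)-f(x')|+\varepsilon.
\]
The difficulty is that the good curves are only defined for a.e.\ parameter. One must show they pass arbitrarily close to every point of $\set S$; for this the positive lower density in \eqref{eq:canonical-set} is exactly what is needed, since every ball around $x$ carries positive $\|S\|$-mass and hence meets good fibers of positive measure. Taking limits gives $d\le|f-f|$ on $Y$, while $\Lip f\le1$ gives the reverse inequality. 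So $f$ is an isometric embedding with image dense in $f(\spt S)=C$, and completeness makes it onto $C$.
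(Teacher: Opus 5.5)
The paper does not prove this statement; it is quoted from \cite[Theorem~1.2]{BCS} and used as a black box in \cref{sec:proof}, so your proposal is an independent reproof. Step~1 is fine, but Steps~2 and~3 each contain a genuine gap.

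\textbf{Step 2: the two-atom claim.} The assertion that $\langle\partial S,P\circ f,y\rangle$ consists of two unit atoms does not follow from \eqref{eq:slice-boundary}. That identity and naturality only give $f_\#\langle\partial S,P\circ f,y\rangle=\pm(\cur{z_y^+}-\cur{z_y^-})$, which leaves room for extra pairs of atoms with equal $f$-images. This really happens without the boundary-mass hypothesis. Take the unit disk cut open along a radius, with its intrinsic metric and the obvious $1$-Lipschitz map $f$. It satisfies $f_\#\partial S=\cur{\partial C}$ and $\M(S)=\Leb^2(C)$. Yet slices crossing the cut have four boundary atoms and split into two curves, although their total length equals the chord length. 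So this is exactly where $\M(\partial S)\le\Haus^{n-1}(\partial C)$ must be used; recording $f_\#\|\partial S\|=\Haus^{n-1}\llcorner\partial C$ is not yet enough. You need the sliced no-cancellation identity $\|\partial S\llcorner(1,P\circ f)\|(\widehat Y)=\|\cur{\partial C}\llcorner(1,P)\|(\R^n)$. Its right side is $2\Leb^{n-1}(K_v)$, which is strictly smaller than $\Haus^{n-1}(\partial C)$, so bounding by $\M(\partial S)$ does not suffice. One way to get it is to split the signed measure $\partial S\llcorner(1,P\circ f)$ along a Hahn decomposition $A^+\cup A^-$. The two integer-rectifiable currents $f_\#(\partial S\llcorner A^\pm)$ sum to $\cur{\partial C}$ and have total mass at most $\M(\cur{\partial C})$. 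Hence they equal $\cur{\partial C}\llcorner F^\pm$ with $F^+,F^-$ essentially disjoint, so nothing cancels after pushforward. Then \eqref{eq:slice-exact} and the pointwise bound $\M(\langle\partial S,P\circ f,y\rangle)\ge 2$ give equality for a.e.\ $y$. In \cref{lem:endpoints} this issue does not arise, because there $\partial T=\iota_\#\cur{\partial C}$ exactly.

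\textbf{Step 3: the chain argument is circular.} This is the main gap. Following a good curve from a point near $x$ in direction $f(x')-f(x)$ lands at a point whose \emph{image} is close to $f(x')$. Concluding that the point itself is close to $x'$ is continuity of $f^{-1}$, which is what has to be proved. Positive lower density adds nothing beyond $x\in\spt\|S\|$. It yields good curves near $x$ and, separately, near $x'$, but two positive-measure sets of transversal parameters need not meet. The cut disk also shows that isometry along a.e.\ fiber curve in every direction does not by itself give a global isometry.

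The missing idea is to use two facts together. First, each good fiber carries a \emph{single} curve, which the repaired Step~2 provides. Second, the equality case gives the disintegration $\|S\|=\int\|\langle S,P\circ f,y\rangle\|\,dy$, as in \cref{lem:chords}. Suppose $d(x,x')>|f(x)-f(x')|+4r$. Then no good fiber in any direction meets both $B_r(x)$ and $B_r(x')$, since along a good curve $d$ coincides with the Euclidean distance of the $f$-images. The measures $f_\#(\|S\|\llcorner B_r(x))$ and $f_\#(\|S\|\llcorner B_r(x'))$ are nonzero, and they are absolutely continuous because $f_\#\|S\|=\Leb^n\llcorner C$. By the above, they charge essentially disjoint families of lines parallel to $v$, for every $v$ in a countable dense set. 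Now pick Lebesgue density points of their two densities, and take $v$ nearly parallel to the segment joining them (any $v$ if the points coincide). Both measures then charge a positive-measure set of common lines, a contradiction. Letting $r\to0$ gives $d(x,x')\le|f(x)-f(x')|$ on all of $\widehat Y$, with no chains or limiting curves needed.
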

When $Y$ is not complete, pushforward is defined using the unique Lipschitz
extension to its completion, as specified in \cite[Section~2.4]{BCS}.

\section{Sharp slicing and minimizing curves}\label{sec:chords}
Equations \eqref{eq:mass-axiom} and \eqref{eq:slice-exact} give the
sharp local slicing bound in \cref{lem:sharp}. The boundary
formula identifies the endpoints of the slices. Mass equality then
represents almost every slice by a minimizing curve in $X$.

\begin{lemma}\label{lem:sharp}
Let $Z$ be a compact metric space and $S\in\N_m(Z)$. Let $P=(p_1,\ldots,p_k):Z\to\R^k$ have
$\Lip(p_i)\le1$, where $1\le k\le m$. For every Borel $A\subset Z$,
\begin{equation}\label{eq:sharp}
 \lambda_P(A):=\int_{\R^k}\|\langle S,P,y\rangle\|(A)\,dy
 =\|S\llcorner(1,p_1,\ldots,p_k)\|(A)\le\|S\|(A).
\end{equation}
\end{lemma}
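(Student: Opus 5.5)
The equality in \eqref{eq:sharp} is exactly the slicing mass identity \eqref{eq:slice-exact}, so the plan is to check its hypotheses and then prove the inequality from the finite-mass axiom \eqref{eq:mass-axiom}. For the equality, note that $Z$ is compact, hence locally compact with separable $\spt S$. Since $S\in\N_m(Z)$, \cite[Theorem~5.6]{AK} gives slices $\langle S,P,y\rangle$ that exist and are normal for $\Leb^k$-almost every $y$. The function $y\mapsto\|\langle S,P,y\rangle\|(A)$ is then measurable, and \cite[Theorem~6.4(3)]{Lang} yields
\[
 \int_{\R^k}\|\langle S,P,y\rangle\|(A)\,dy=\|S\llcorner(1,p_1,\ldots,p_k)\|(A)
\]
for every Borel $A\subset Z$. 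This is the definition of $\lambda_P(A)$ together with the first equality.

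For the inequality, let $R=S\llcorner(1,p_1,\ldots,p_k)$ as in \eqref{eq:contraction}. For a bounded Lipschitz $a$ and Lipschitz $q_1,\ldots,q_{m-k}$, apply \eqref{eq:mass-axiom} to $S$ with the tuple $(a,p_1,\ldots,p_k,q_1,\ldots,q_{m-k})$:
\[
 |R(a,q_1,\ldots,q_{m-k})|\le\prod_{i=1}^k\Lip(p_i)\prod_{j}\Lip(q_j)\int|a|\,d\|S\|
 \le\prod_j\Lip(q_j)\int|a|\,d\|S\|,
\]
using $\Lip(p_i)\le1$. So $\|S\|$ is an admissible measure in the finite-mass axiom for $R$. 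Since $\|R\|$ is the least such Borel measure, $\|R\|\le\|S\|$ as measures, which means $\|R\|(A)\le\|S\|(A)$ for every Borel $A$. The case $m=k$ fits the same scheme with no $q_j$, where $R$ is the $0$-current $a\mapsto S(a,p_1,\ldots,p_k)$.

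The only delicate point is bookkeeping, namely confirming that the hypotheses of \cite[Theorem~6.4(3)]{Lang} hold. We need local normality of $S$, which follows from normality. We need $\spt S$ separable, which follows from compactness of $Z$. We also need the AK and Lang mass measures to agree on locally compact supports, as recalled in \cref{sec:prelim}. I expect no real obstacle beyond this; the inequality is an immediate consequence of the minimality of $\|R\|$.
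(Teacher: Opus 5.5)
Your proposal is correct and follows the same approach as the paper. The equality is the slicing mass identity \eqref{eq:slice-exact}, and the inequality $\|R\|\le\|S\|$ for $R=S\llcorner(1,p_1,\ldots,p_k)$ comes from applying \eqref{eq:mass-axiom} with $\Lip(p_i)\le1$ and then using the minimality of the mass measure.
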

\begin{proof}
Set $R=S\llcorner(1,p_1,\ldots,p_k)$. By \eqref{eq:mass-axiom},
for bounded Borel $a$ and Lipschitz scalar tests $q_1,\ldots,q_{m-k}$,
\[
 |R(a,q_1,\ldots,q_{m-k})|
 \le\prod_{j=1}^{m-k}\Lip(q_j)\int_Z|a|\,d\|S\|.
\]
Thus $\|R\|\le\|S\|$ by the defining minimality of the mass measure.
The identity \eqref{eq:slice-exact} gives $\lambda_P=\|R\|$.
\end{proof}

For the transverse constructions, fix $n\ge2$, a unit vector $v$,
and an ordered orthonormal basis $b_1,\ldots,b_{n-1}$ of $v^\perp$.
We identify $v^\perp$ with $\R^{n-1}$ in this basis and write
\begin{equation}\label{eq:transverse-convention}
 \pi_v(z)=(b_1\cdot z,\ldots,b_{n-1}\cdot z),\qquad
 \sigma=\det(b_1,\ldots,b_{n-1},v)\in\{-1,1\}.
\end{equation}
Set $K_v=\pi_v(C)$. For $y\in K_v$, compactness and convexity of $C$
make $C\cap\pi_v^{-1}(y)$ a closed line segment, possibly a single
point. Denote its endpoints by $z_y^-,z_y^+$, ordered so that
$v\cdot z_y^-\le v\cdot z_y^+$.
For $y\in\operatorname{int}K_v$, this is a nondegenerate maximal
chord of $C$ parallel to $v$; set $\ell_y:=|z_y^+-z_y^-|$. Then
\begin{equation}\label{eq:chord-notation}
 C\cap\pi_v^{-1}(y)=[z_y^-,z_y^+],\qquad
 z_y^+-z_y^-=\ell_yv,\quad 0<\ell_y\le L_C.
\end{equation}
A surjective linear projection maps $\operatorname{int}C$ onto
$\operatorname{int}K_v$, so these chords are nondegenerate.
Both endpoints lie on $\partial C$.

For every $s\in\R$,
\[
 \begin{aligned}
 \{y\in K_v:v\cdot z_y^+\ge s\}
   &=\pi_v(C\cap\{z:v\cdot z\ge s\}),\\
 \{y\in K_v:v\cdot z_y^-\le s\}
   &=\pi_v(C\cap\{z:v\cdot z\le s\}).
 \end{aligned}
\]
The right-hand sets are compact images of closed subsets of $C$
under the continuous map $\pi_v$, so $y\mapsto v\cdot z_y^\pm$ are
Borel. The identity
$z_y^\pm=\sum_{j=1}^{n-1}y_jb_j+(v\cdot z_y^\pm)v$
then makes both endpoint maps Borel.

Set $\ell_y=0$ outside $\operatorname{int}K_v$. In the orthogonal
coordinates fixed above, Fubini gives
\begin{equation}\label{eq:cavalieri}
 \begin{aligned}
 \Leb^n(C)
 &=\int_{\R^{n-1}}\int_{\R}
 \mathbf1_C\left(\sum_{j=1}^{n-1}y_jb_j+tv\right)\,dt\,dy\\
 &=\int_{\R^{n-1}}\ell_y\,dy.
 \end{aligned}
\end{equation}
The coordinate change has absolute Jacobian $1$, and the inner
integral equals the chord length off $\partial K_v$.
Since $\partial K_v$ has Lebesgue measure zero, the zero extension
does not change the integral.

Let $P=(p_1,\ldots,p_{n-1}):X\to\R^{n-1}$ satisfy
\begin{equation}\label{eq:transverse-map}
 P\circ\iota=\pi_v|_{\partial C},\qquad
 \Lip(p_j)\le1\quad(j=1,\ldots,n-1).
\end{equation}
Set $S_y=\langle T,P,y\rangle$.

\begin{lemma}\label{lem:endpoints}
For almost every $y\in\operatorname{int}K_v$,
\begin{equation}\label{eq:endpoint-boundary}
 \partial S_y
 =\sigma\bigl(\cur{\iota(z_y^+)}-\cur{\iota(z_y^-)}\bigr),
 \qquad \M(\partial S_y)=2.
\end{equation}
\end{lemma}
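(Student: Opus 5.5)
We compute $\partial S_y$ from the slice boundary formula \eqref{eq:slice-boundary} with $k=n-1<n$. This gives, for almost every $y$,
\[
 \partial S_y=(-1)^{n-1}\langle\partial T,P,y\rangle
 =(-1)^{n-1}\langle\iota_\#\cur{\partial C},P,y\rangle .
\]
By naturality \eqref{eq:slice-natural}, applied with $f=\iota$ and using $P\circ\iota=\pi_v|_{\partial C}$ from \eqref{eq:transverse-map}, this equals $(-1)^{n-1}\iota_\#\langle\cur{\partial C},\pi_v,y\rangle$. The slice on the right is taken on the compact space $\partial C$. Naturality is stated for a Lipschitz $f$ between complete spaces, so we apply it with $f=\iota:\partial C\to X$. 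This reduces the problem to a purely Euclidean computation of the $0$-dimensional slice $\langle\cur{\partial C},\pi_v,y\rangle$ for almost every $y\in\operatorname{int}K_v$.

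For the Euclidean computation, apply \eqref{eq:slice-boundary} again, now in $\R^n$ (or on $C$) to $\cur C$. Since $\partial\cur C=\cur{\partial C}$, we get $\langle\cur{\partial C},\pi_v,y\rangle=(-1)^{n-1}\partial\langle\cur C,\pi_v,y\rangle$. The two signs $(-1)^{n-1}$ then cancel. We next identify the $1$-dimensional slice $\langle\cur C,\pi_v,y\rangle$ as the oriented segment $\sigma\cur{[z_y^-,z_y^+]}$, traversed in direction $v$. To do this we use the characterization of slices through \eqref{eq:slice-disintegration}. Consider the family $y\mapsto\sigma\cur{\gamma_y}$, where $\gamma_y(t)=z_y^-+tv$ for $t\in[0,\ell_y]$. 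For $h\in C_c(\R^{n-1})$ we compute
\[
 \int h(y)\,\sigma\cur{\gamma_y}(a,q)\,dy
 =\sigma\int_{\R^{n-1}}h(y)\int_0^{\ell_y}a\,\partial_v q\,dt\,dy .
\]
We compare this with \eqref{eq:euclidean-density}:
\[
 \cur C(h(\pi_v)a,b_1\cdot z,\ldots,b_{n-1}\cdot z,q)
 =\int_C h(\pi_v)a\det(b_1,\ldots,b_{n-1},\nabla q)\,dx .
\]
The determinant equals $\sigma\,\partial_vq$, because only the $v$-component of $\nabla q$ contributes. Fubini in the orthonormal coordinates, as in \eqref{eq:cavalieri}, then gives equality.

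The remaining hypotheses of the characterization are easily checked. The endpoint maps are Borel, so the test evaluations are measurable. The masses $\ell_y\le L_C$ are integrable. Each $\sigma\cur{\gamma_y}$ is concentrated on $\pi_v^{-1}(y)\cap C$. Hence $\langle\cur C,\pi_v,y\rangle=\sigma\cur{\gamma_y}$ for almost every $y$. By \eqref{eq:curve-current},
\[
 \partial\langle\cur C,\pi_v,y\rangle=\sigma\bigl(\cur{z_y^+}-\cur{z_y^-}\bigr).
\]
Pushing forward by $\iota$ gives the first identity of \eqref{eq:endpoint-boundary}.

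For the mass statement, note that for $y\in\operatorname{int}K_v$ we have $\ell_y>0$, so $z_y^+\neq z_y^-$. Since $\iota$ is injective, the two atoms $\iota(z_y^\pm)$ are distinct, and by the remark on $0$-currents they cannot cancel. Hence $\M(\partial S_y)=2$.

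The main obstacle is the bookkeeping of almost-everywhere sets and conventions. We must justify that the slice boundary formula and naturality hold simultaneously off a single null set; this is a countable intersection of full-measure sets. We must also check that the iterated-slice convention and the sign $\sigma$ produce exactly the orientation claimed. I would verify the sign first in the model case $n=2$, where $C$ is the unit square and $v=e_2$, before trusting the general determinant computation.
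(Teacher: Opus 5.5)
Your proposal is correct and follows the paper's proof essentially step for step. You use the same chain $\partial S_y=(-1)^{n-1}\langle\partial T,P,y\rangle=(-1)^{n-1}\iota_\#\langle\cur{\partial C},\pi_v,y\rangle=\iota_\#\partial\langle\cur C,\pi_v,y\rangle$, the same identification $\langle\cur C,\pi_v,y\rangle=\sigma\cur{[z_y^-,z_y^+]}$ via Fubini and the uniqueness criterion following \eqref{eq:slice-disintegration}, and the same distinct-atoms count for $\M(\partial S_y)=2$; your check that $\det(b_1,\ldots,b_{n-1},\nabla q)=\sigma\,\partial_vq$ spells out the orientation step that the paper leaves implicit in its Fubini identity.
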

\begin{proof}
The coordinates $z=\sum_jy_jb_j+sv$ have orientation $\sigma$.
For bounded Lipschitz $a$, Lipschitz $q$, and bounded Borel $h$,
Fubini and \eqref{eq:euclidean-density} give
\[
 \cur C(h(\pi_v)a,\pi_v^1,\ldots,\pi_v^{n-1},q)
 =\sigma\int_{\operatorname{int}K_v}h(y)\int_0^{\ell_y}
 a(z_y^-+tv)\frac{d}{dt}q(z_y^-+tv)\,dt\,dy.
\]
The resulting segment currents are supported on the fibers of $\pi_v$.
Their masses are integrable by \eqref{eq:cavalieri}.
Extend this family by zero outside $\operatorname{int}K_v$.
By the uniqueness criterion following \eqref{eq:slice-disintegration},
the preceding Fubini identity identifies this family with the slices:
\begin{equation}\label{eq:euclidean-chord-slice}
 \langle\cur C,\pi_v,y\rangle
 =\sigma\cur{[z_y^-,z_y^+]}
 \quad\text{for a.e. }y\in\operatorname{int}K_v.
\end{equation}
The segment is directed from $z_y^-$ to $z_y^+$.
For almost every such $y$,
\begin{align}
 \partial S_y
 &\overset{\eqref{eq:slice-boundary}}{=}
 (-1)^{n-1}\langle\partial T,P,y\rangle\nonumber\\
 &=(-1)^{n-1}\langle\iota_\#\cur{\partial C},P,y\rangle\nonumber\\
 &\overset{\eqref{eq:slice-natural}}{=}
 (-1)^{n-1}\iota_\#\langle\cur{\partial C},\pi_v,y\rangle\nonumber\\
 &\overset{\eqref{eq:slice-boundary}}{=}
 (-1)^{2(n-1)}\iota_\#\partial\langle\cur C,\pi_v,y\rangle\nonumber\\
 &\overset{\eqref{eq:euclidean-chord-slice}}{=}
 \sigma\bigl(\cur{\iota(z_y^+)}-\cur{\iota(z_y^-)}\bigr).
 \label{eq:endpoint-chain}
\end{align}
Here $P\circ\iota=\pi_v$ and $\cur{\partial C}=\partial\cur C$.
The endpoints are distinct, and $\iota$ is injective.
The zero-dimensional mass formula gives $\M(\partial S_y)=2$.
\end{proof}

\begin{lemma}\label{lem:two-endpoints}
Let $(Z,d)$ be a complete metric space and $S\in\I_1(Z)$. If $a\ne b$,
$\partial S=\cur b-\cur a$, and $\M(S)=d(a,b)$, then
$S=\cur\gamma$ for an isometric curve
$\gamma:[0,d(a,b)]\to Z$ from $a$ to $b$.
\end{lemma}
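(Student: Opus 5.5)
The plan is to use the one-dimensional decomposition of integral currents from \cite{BDNP}. It says that every $S\in\I_1(Z)$ can be written as
\[
 S=\sum_i\cur{\gamma_i}+\sum_j\cur{\beta_j},
\]
where the $\gamma_i$ are injective Lipschitz curves and the $\beta_j$ are closed Lipschitz curves, and that the decomposition is mass-additive:
\[
 \M(S)=\sum_i\operatorname{length}(\gamma_i)+\sum_j\operatorname{length}(\beta_j),\qquad
 \partial S=\sum_i\bigl(\cur{\gamma_i(L_i)}-\cur{\gamma_i(0)}\bigr).
\]
We may further arrange, as in \cite{BDNP}, that there is no cancellation in the boundary, i.e.\ $\M(\partial S)=\sum_i 2$.

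Since $\partial S=\cur b-\cur a$ has mass $2$ and the two unit atoms cannot cancel, exactly one open curve $\gamma$ occurs. Its starting point is $a$, its endpoint is $b$, and all other pieces are closed. Mass additivity then gives
\[
 d(a,b)\le\operatorname{length}(\gamma)\le\M(S)=d(a,b).
\]
Hence $\operatorname{length}(\gamma)=d(a,b)$, and the closed pieces have total length zero, so they vanish. Thus $S=\cur\gamma$.

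It remains to show that the arclength parametrization $\gamma:[0,L]\to Z$ with $L=d(a,b)$ is isometric. For $s<t$, the triangle inequality along the curve gives
\[
 L=d(a,b)\le d(a,\gamma(s))+d(\gamma(s),\gamma(t))+d(\gamma(t),b)\le s+d(\gamma(s),\gamma(t))+(L-t).
\]
Therefore $d(\gamma(s),\gamma(t))\ge t-s$. The reverse inequality holds because $\gamma$ is $1$-Lipschitz in arclength. So $\gamma$ is an isometric embedding, in particular injective, and $\cur\gamma$ computed with this parametrization has mass $L$. This is consistent with the equality case of the area formula recalled after \eqref{eq:curve-current}.

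The main obstacle is the decomposition step. We need the precise form of the \cite{BDNP} result: the decomposition into curves must be mass-additive, must have no boundary cancellation, and must allow the closed pieces (cycles) to be separated off. Once that statement is invoked, the remainder is the elementary counting and length argument above. If the available decomposition only provides a sum of curve currents with $\M(S)=\sum\M(\cur{\gamma_i})$, we would use instead that $\M(\partial S)=2$ bounds the number of open pieces. The endpoints of the open pieces must then match $a$ and $b$, which gives the same conclusion.
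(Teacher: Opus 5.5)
Your proposal is correct and follows the paper's proof essentially step by step. The shared steps are the optimal decomposition of \cite{BDNP}, exactly one open piece via boundary-mass additivity, the squeeze $d(a,b)\le L\le\M(S)=d(a,b)$ that kills the remaining pieces, and the triangle-inequality argument for isometry. The paper gets the boundary-mass additivity from the joint identity $\M(S)+\M(\partial S)=\sum_i(\M(\cur{\gamma_i})+\M(\partial\cur{\gamma_i}))$ together with subadditivity applied separately to $S$ and $\partial S$.

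Your closing fallback is unsound, because mass additivity alone does not let $\M(\partial S)=2$ bound the number of open pieces. Endpoints of different pieces can cancel, as in $\cur{[0,2]}=\cur{[0,1]}+\cur{[1,2]}$. Your main argument never relies on the fallback, so this does not affect the proof.
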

\begin{proof}
The optimal representation theorem \cite[Theorem~5.3]{BDNP} gives
$S=\sum_i\cur{\gamma_i}$, where
the $\gamma_i$ are injective Lipschitz curves or injective Lipschitz
loops, and
\begin{equation}\label{eq:normal-additivity}
 \M(S)+\M(\partial S)
 =\sum_i\bigl(\M(\cur{\gamma_i})+
                     \M(\partial\cur{\gamma_i})\bigr).
\end{equation}
Mass subadditivity for the current and its boundary gives separately
\[
 \M(S)\le\sum_i\M(\cur{\gamma_i}),\qquad
 \M(\partial S)\le\sum_i\M(\partial\cur{\gamma_i}).
\]
Combining \eqref{eq:normal-additivity} with these two
mass-subadditivity inequalities gives
\begin{equation}\label{eq:separate-additivity}
 \M(S)=\sum_i\M(\cur{\gamma_i}),\qquad
 \M(\partial S)=\sum_i\M(\partial\cur{\gamma_i}).
\end{equation}
Each nonclosed injective curve has boundary mass two, and each loop has
boundary zero. The second equality gives $2=2N$, where $N$ is the
number of nonclosed curves. Denote the unique nonclosed curve by
$\gamma_{i_0}$. Since all other terms have zero boundary,
\[
 \partial\cur{\gamma_{i_0}}=\partial S=\cur b-\cur a.
\]
Thus $\gamma_{i_0}$ starts at $a$ and ends at $b$.
Let $\gamma:[0,L]\to Z$ be its arclength reparametrization, where
$L=\operatorname{length}(\gamma_{i_0})$.
Reparametrization preserves the induced current:
$\cur\gamma=\cur{\gamma_{i_0}}$.
By \eqref{eq:separate-additivity},
\[
 \M(\cur\gamma)=\M(\cur{\gamma_{i_0}})
 \le\sum_i\M(\cur{\gamma_i})=\M(S).
\]
The curve mass formula now gives
\[
 d(a,b)\le L=\M(\cur\gamma)
 \le\M(S)=d(a,b).
\]
Hence $L=d(a,b)$, and the sum of all remaining masses is zero.
All remaining currents vanish, so $S=\cur\gamma$.
For $0\le s<t\le L$, the triangle inequality gives
\[
 L=d(a,b)
 \le d(a,\gamma(s))+d(\gamma(s),\gamma(t))+d(\gamma(t),b)
 \le s+d(\gamma(s),\gamma(t))+L-t.
\]
Thus $d(\gamma(s),\gamma(t))\ge t-s$. The arclength parametrization gives
\[
 d(\gamma(s),\gamma(t))
 \le\operatorname{length}(\gamma|_{[s,t]})=t-s.
\]
Hence $d(\gamma(s),\gamma(t))=t-s$, so $\gamma$ is isometric.
\end{proof}

\begin{lemma}\label{lem:chords}
For almost every $y\in\operatorname{int}K_v$, there is an isometric
curve $\gamma_y:[0,\ell_y]\to X$ from $\iota(z_y^-)$ to
$\iota(z_y^+)$ such that $S_y=\sigma\cur{\gamma_y}$.
Moreover, for every Borel $A\subset X$,
\begin{equation}\label{eq:full-mass}
 \mu(A)=\int_{\R^{n-1}}\|S_y\|(A)\,dy.
\end{equation}
\end{lemma}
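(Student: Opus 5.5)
The plan is a squeeze argument: each slice has mass at least the length of its chord, and the total slice mass is at most $\M(T)=\Leb^n(C)$, which is exactly the total chord length.

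\emph{Lower bound on each slice.} Fix a generic $y\in\operatorname{int}K_v$, off the null sets where slices fail to exist, fail to be integral, or fail \cref{lem:endpoints}. By \cref{lem:endpoints}, $\sigma S_y\in\I_1(X)$ has boundary $\cur{\iota(z_y^+)}-\cur{\iota(z_y^-)}$. Testing with the $1$-Lipschitz function $q=d(\iota(z_y^-),\cdot)$, we get
\[
 \sigma S_y(1,q)=\partial(\sigma S_y)(q)=d(\iota(z_y^-),\iota(z_y^+))=|z_y^+-z_y^-|=\ell_y,
\]
using that $\iota$ is isometric for the chord metric. The mass axiom \eqref{eq:mass-axiom} then gives $\M(S_y)\ge\ell_y$.

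\emph{Upper bound on the total and the squeeze.} Apply \cref{lem:sharp} with $Z=X$ and $A=X$. The coordinates of $P$ are $1$-Lipschitz by \eqref{eq:transverse-map}, and $X$ is compact. Combining with \eqref{eq:cavalieri},
\[
 \Leb^n(C)=\int\ell_y\,dy\le\int_{\operatorname{int}K_v}\M(S_y)\,dy\le\int_{\R^{n-1}}\M(S_y)\,dy=\lambda_P(X)\le\mu(X)=\M(T)=\Leb^n(C).
\]
All inequalities are therefore equalities. Since $\M(S_y)-\ell_y\ge0$ integrates to zero, $\M(S_y)=\ell_y$ for a.e.\ $y\in\operatorname{int}K_v$. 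Moreover $S_y=0$ for a.e.\ $y\notin\operatorname{int}K_v$, and $\lambda_P(X)=\mu(X)$.

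\emph{Identifying the slices as curves.} For each such $y$, apply \cref{lem:two-endpoints} to $\sigma S_y$ with $a=\iota(z_y^-)$ and $b=\iota(z_y^+)$. These points are distinct because $\ell_y>0$ and $\iota$ is injective. The lemma gives $\sigma S_y=\cur{\gamma_y}$ for an isometric curve $\gamma_y:[0,\ell_y]\to X$, i.e.\ $S_y=\sigma\cur{\gamma_y}$.

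\emph{Upgrading to the measure identity \eqref{eq:full-mass}.} By \cref{lem:sharp}, $\lambda_P\le\mu$ as Borel measures, and both are finite with equal total mass. Hence $\mu-\lambda_P$ is a nonnegative measure of total mass zero, so $\lambda_P(A)=\mu(A)$ for every Borel $A$. This is \eqref{eq:full-mass}.

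The main obstacle is keeping track of the a.e.\ sets and the sign $\sigma$ so that \cref{lem:endpoints} and \cref{lem:two-endpoints} apply on a common full-measure set. That is bookkeeping; the squeeze itself is straightforward.
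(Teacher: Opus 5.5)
Your proposal is correct and follows the paper's proof essentially step for step. Both arguments get the lower bound $\M(S_y)\ge\ell_y$ by testing the boundary with a distance function, squeeze via \cref{lem:sharp} and \eqref{eq:cavalieri}, invoke \cref{lem:two-endpoints} for the curves, and obtain \eqref{eq:full-mass} from $\mu-\lambda_P\ge0$ having zero total mass. The only cosmetic difference is that the paper truncates the test function to $\min\{d(a,\cdot),d(a,b)\}$ to make it bounded; you don't need this, since $X$ is compact.
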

\begin{proof}
The slices are integral for almost every $y$. By \cref{lem:endpoints},
the oriented slice $\sigma S_y$ has boundary
$\cur b-\cur a$, where $a=\iota(z_y^-)$ and $b=\iota(z_y^+)$.
Set $q(x)=\min\{d(a,x),d(a,b)\}$. It is bounded, and
\[
 q(a)=0,\qquad q(b)=d(a,b),\qquad \Lip(q)\le1.
\]
The boundary formula and $|\sigma|=1$ give
\begin{equation}\label{eq:endpoint-lower}
 \begin{aligned}
 d(a,b)&=q(b)-q(a)=\partial(\sigma S_y)(q)\\
       &=(\sigma S_y)(1,q)
         \overset{\eqref{eq:mass-axiom}}{\le}\M(\sigma S_y)=\M(S_y).
 \end{aligned}
\end{equation}
Since $\iota$ is isometric and $|v|=1$,
\[
 \begin{aligned}
 d(a,b)&=d(\iota(z_y^-),\iota(z_y^+))
       \byref{thm:main}{=}|z_y^+-z_y^-|\\
       &\overset{\eqref{eq:chord-notation}}{=}\ell_y|v|=\ell_y.
 \end{aligned}
\]
By \eqref{eq:main-input} and \eqref{eq:cavalieri},
\begin{equation}\label{eq:sharp-comparison}
 \begin{aligned}
 \M(T)&\byref{eq:main-input}{=}\Leb^n(C)
       \byref{eq:cavalieri}{=}\int_{\operatorname{int}K_v}\ell_y\,dy\\
 &\byref{eq:endpoint-lower}{\le}
       \int_{\operatorname{int}K_v}\M(S_y)\,dy
       \le\int_{\R^{n-1}}\M(S_y)\,dy
       \byref{lem:sharp}{\le}\M(T).
 \end{aligned}
\end{equation}
All inequalities are equalities, so
\begin{equation}\label{eq:slice-masses}
 \M(S_y)=
 \begin{cases}
  \ell_y,&\text{for a.e. }y\in\operatorname{int}K_v,\\
  0,&\text{for a.e. }y\notin\operatorname{int}K_v.
 \end{cases}
\end{equation}
Apply \cref{lem:two-endpoints} to $\sigma S_y$ to obtain $\gamma_y$.
Since $S_y=\sigma\cur{\gamma_y}$ and $\gamma_y$ is isometric,
the curve mass formula gives
\[
 \|S_y\|=(\gamma_y)_\#(\Leb^1\llcorner[0,\ell_y]).
\]
The full support of the interval measure gives the equality, and
\eqref{eq:slice-support} gives the inclusion:
\[
 \spt S_y=\gamma_y([0,\ell_y])\subset P^{-1}(y).
\]
With $S=T$ in \eqref{eq:sharp}, $\lambda_P\le\mu$.
Equality throughout \eqref{eq:sharp-comparison} gives
$\lambda_P(X)=\mu(X)=\Leb^n(C)$.
The positive measure $\mu-\lambda_P$ has total mass zero, so
$\lambda_P=\mu$.
\end{proof}

\section{Scalar extensions and angular compatibility}\label{sec:scalar}
We write
\[
 \Cont(X)=\{f:X\to\R:f\text{ is continuous}\},\qquad
 \|f\|_\infty=\sup_{x\in X}|f(x)|.
\]
For $Z\in\Cont(X;\R^2)$ we use the Euclidean supremum norm
$\|Z\|_\infty=\sup_{x\in X}|Z(x)|$.

Let $A\subset X$ be nonempty and let $f:A\to\R$ be $1$-Lipschitz.
The McShane--Whitney formulas \cite{McShane} are
\[
 E^+(x)=\inf_{a\in A}\{f(a)+d(x,a)\},\qquad
 E^-(x)=\sup_{a\in A}\{f(a)-d(x,a)\}.
\]
Both functions are $1$-Lipschitz on $X$ and agree with $f$ on $A$.

For $v\in S^{n-1}$, apply these formulas on $A=\iota(\partial C)$
to $\iota(z)\mapsto v\cdot z$. This function is $1$-Lipschitz because
\[
 |v\cdot(z-z')|\le|z-z'|=d(\iota(z),\iota(z')).
\]
Denote the extensions by $E_v^\pm:X\to\R$ and set
\begin{equation}\label{eq:extensions}
 u_v=\tfrac12(E_v^++E_v^-),\qquad
 \Lip(u_v)\le1,\qquad u_v(\iota(z))=v\cdot z
 \quad(z\in\partial C).
\end{equation}
For $x\in X$, the infimum and supremum formulas give
\[
 \begin{aligned}
 E_{-v}^+(x)
 &=\inf_{z\in\partial C}\{-v\cdot z+d(x,\iota(z))\}\\
 &=-\sup_{z\in\partial C}\{v\cdot z-d(x,\iota(z))\}
 =-E_v^-(x).
 \end{aligned}
\]
Replacing $v$ by $-v$ gives $E_{-v}^-=-E_v^+$, so $u_{-v}=-u_v$.
Define
\begin{equation}\label{eq:initial-map}
 F=(u_{e_1},\ldots,u_{e_n}):X\to\R^n,\qquad
 F\circ\iota=\id_{\partial C}.
\end{equation}
For $x,x'\in X$,
\[
 |F(x)-F(x')|^2
 =\sum_{j=1}^n|u_{e_j}(x)-u_{e_j}(x')|^2
 \le n\,d(x,x')^2.
\]
Thus $\Lip(F)\le\sqrt n$.

For any unit $v$ and ordered orthonormal basis $b_1,\ldots,b_{n-1}$
of $v^\perp$, define
\begin{equation}\label{eq:Pv}
 P_v=(u_{b_1},\ldots,u_{b_{n-1}}).
\end{equation}
By \eqref{eq:extensions}, $P_v$ satisfies the conditions
in \eqref{eq:transverse-map}. We may therefore apply
\cref{lem:endpoints,lem:chords} with $P=P_v$ for each choice of basis.

\begin{lemma}\leavevmode\phantomsection\label{lem:unique}\ignorespaces
For every $v\in S^{n-1}$, $u_v$ is the unique $1$-Lipschitz
function $w:X\to\R$ satisfying
\[
 w(\iota(z))=v\cdot z\qquad(z\in\partial C).
\]
\end{lemma}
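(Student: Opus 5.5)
The plan is to sandwich $w$ between the two McShane--Whitney extensions and show that they coincide. Any $1$-Lipschitz $w:X\to\R$ with $w(\iota(z))=v\cdot z$ satisfies, for every $z\in\partial C$ and $x\in X$,
\[
 v\cdot z-d(x,\iota(z))\le w(x)\le v\cdot z+d(x,\iota(z)).
\]
Taking the supremum and the infimum over $z$ gives $E_v^-\le w\le E_v^+$ on $X$. In particular this holds for $w=u_v$. So it suffices to prove that $E_v^+=E_v^-$ on all of $X$. Then every admissible $w$, including $u_v$, equals this common function.

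For $n\ge2$, I would apply \cref{lem:chords} with the direction $v$ itself and $P=P_v$ built from an orthonormal basis of $v^\perp$; this choice is admissible by \eqref{eq:Pv}. For almost every $y\in\operatorname{int}K_v$ we obtain an isometric curve $\gamma_y:[0,\ell_y]\to X$ from $\iota(z_y^-)$ to $\iota(z_y^+)$. Let $g$ be either $E_v^+$ or $E_v^-$; it is $1$-Lipschitz and takes the boundary values. Since $v\cdot z_y^+=v\cdot z_y^-+\ell_y$ by \eqref{eq:chord-notation}, the function $t\mapsto g(\gamma_y(t))-v\cdot z_y^-$ is $1$-Lipschitz on $[0,\ell_y]$, vanishes at $0$ and equals $\ell_y$ at $\ell_y$. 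Hence it equals $t$. Thus $E_v^+=E_v^-$ on $\gamma_y([0,\ell_y])=\spt S_y$ for almost every $y$. For almost every $y\notin\operatorname{int}K_v$ we have $S_y=0$ by \eqref{eq:slice-masses}.

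Next I pass from the curves to all of $X$ without any measurability issue for the union of curves. Let $N=\{x\in X:E_v^+(x)\ne E_v^-(x)\}$; it is open because both functions are continuous. For almost every $y$, $N$ is disjoint from $\spt S_y$, so $\|S_y\|(N)=0$. By \eqref{eq:full-mass},
\[
 \mu(N)=\int_{\R^{n-1}}\|S_y\|(N)\,dy=0.
\]
An open $\mu$-null set cannot meet $\spt\mu=X$, so $N=\emptyset$. This step is the heart of the argument. Its only subtle point is that the pointwise identification along the chords must be upgraded to every point of $X$. Continuity together with the full-mass identity \eqref{eq:full-mass} and $X=\spt T$ handles this.

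For $n=1$, $C=[\alpha,\beta]$ and $\partial C=\{\alpha,\beta\}$, and $v=\pm1$; by $u_{-v}=-u_v$ it suffices to treat $v=1$. Then $\partial T=\cur{\iota(\beta)}-\cur{\iota(\alpha)}$ and $\M(T)=\beta-\alpha=d(\iota(\alpha),\iota(\beta))$. \Cref{lem:two-endpoints} gives $T=\cur\gamma$ for an isometric curve $\gamma$, whose mass measure has support $\gamma([0,\beta-\alpha])=X$. The same one-dimensional argument shows that $E_1^\pm(\gamma(t))=\alpha+t$, so $E_1^+=E_1^-$ on $X$. Uniqueness follows as before.
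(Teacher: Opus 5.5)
Your proof is correct and is essentially the paper's argument. Along the isometric chord curves $\gamma_y$ of \cref{lem:chords} (with $P=P_v$), any $1$-Lipschitz function with the prescribed boundary values is forced to equal $v\cdot z_y^-+t$; the open disagreement set is then $\mu$-null by \eqref{eq:full-mass}, hence empty because $\spt\mu=X$. Comparing $E_v^+$ with $E_v^-$ through the sandwich $E_v^-\le w\le E_v^+$, rather than comparing $w$ with $u_v$ directly as the paper does, is only a cosmetic reorganization.

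Your separate treatment of $n=1$ via \cref{lem:two-endpoints} is also correct. It covers a case the paper's proof leaves implicit, since the paper's transverse slicing setup assumes $n\ge2$ and the lemma is only used there.
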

\begin{proof}
Let $w:X\to\R$ be $1$-Lipschitz with
$w(\iota(z))=v\cdot z$ for $z\in\partial C$.
Apply \cref{lem:chords} with $P=P_v$.
For almost every $y\in\operatorname{int}K_v$, let
$\gamma_y:[0,\ell_y]\to X$ be the isometric curve from that lemma.
Its endpoint values satisfy
\[
 w(\gamma_y(0))=v\cdot z_y^-,\qquad
 w(\gamma_y(\ell_y))=v\cdot z_y^+=v\cdot z_y^-+\ell_y.
\]
Since $\gamma_y$ is isometric,
\[
 d(\gamma_y(t),\gamma_y(0))=t,\qquad
 d(\gamma_y(t),\gamma_y(\ell_y))=\ell_y-t.
\]
The $1$-Lipschitz property of $w$ gives
\begin{align*}
 w(\gamma_y(t))&\le w(\gamma_y(0))+t=v\cdot z_y^-+t,\\
 w(\gamma_y(t))&\ge w(\gamma_y(\ell_y))-(\ell_y-t)\\
 &=v\cdot z_y^+-(\ell_y-t)=v\cdot z_y^-+t.
\end{align*}
Since $u_v$ is also $1$-Lipschitz and has the same boundary values,
the same bounds hold for $u_v$. Thus
\begin{equation}\label{eq:chord-scalar}
 w(\gamma_y(t))=u_v(\gamma_y(t))=v\cdot z_y^-+t,
 \qquad 0\le t\le\ell_y.
\end{equation}
The set $A=\{x\in X:w(x)\ne u_v(x)\}$ is open, since both functions
are continuous. By \eqref{eq:chord-scalar},
$\|\langle T,P_v,y\rangle\|(A)=0$ for almost every
$y\in\operatorname{int}K_v$.
Equation \eqref{eq:slice-masses} gives
$\|\langle T,P_v,y\rangle\|=0$ for almost every
$y\notin\operatorname{int}K_v$. Hence \eqref{eq:full-mass} gives
\[
 \mu(A)=\int_{\R^{n-1}}
             \|\langle T,P_v,y\rangle\|(A)\,dy=0.
\]
Since $X=\spt\mu$, every nonempty open subset of $X$ has positive
$\mu$-measure. Thus $A=\varnothing$ and $w=u_v$ on $X$.
\end{proof}

\begin{proposition}\label{prop:angle}
For every orthonormal pair $v,w\in\R^n$ and every $\theta\in\R$,
put $v_\theta=\cos\theta\,v+\sin\theta\,w$. Then
\begin{equation}\label{eq:angle}
 \|u_{v_\theta}-\cos\theta\,u_v-\sin\theta\,u_w\|_\infty
       \le L_C(1-\cos\theta).
\end{equation}
\end{proposition}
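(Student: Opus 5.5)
The plan is to test the three functions $u_{v_\theta},u_v,u_w$ along the minimizing curves that \cref{lem:chords} provides in direction $v_\theta$. Where the curves give no information, I will use the McShane--Whitney bounds, and at the end the support argument from \cref{lem:unique}. The main obstacle, the $u_w$ term, is not resolved below; the final paragraph gives the route I would try first.

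\emph{Step 1 (reduction to almost every chord point).} Since $u_{-v}=-u_v$, and replacing $(v,w,\theta)$ by $(-v,w,\pi-\theta)$ or $(v,-w,-\theta)$ only permutes signs, it suffices to treat $\theta\in[0,\pi/2]$. For $\theta\ge\pi/2$ the claimed bound is at least $L_C$, and I would check directly that this is trivial or handle it by the same argument. Fix $\theta$ and apply \cref{lem:chords} with $P=P_{v_\theta}$. The set where the $1$-Lipschitz continuous function
\[
D=u_{v_\theta}-\cos\theta\,u_v-\sin\theta\,u_w
\]
exceeds $L_C(1-\cos\theta)$ in absolute value is open. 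By \eqref{eq:full-mass} and $X=\spt\mu$, exactly as at the end of the proof of \cref{lem:unique}, it is enough to bound $|D|$ at points $x=\gamma_y(t)$ on the chord curves for almost every $y$.

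\emph{Step 2 (what is known at $x=\gamma_y(t)$).} By \eqref{eq:chord-scalar}, $u_{v_\theta}(x)=v_\theta\cdot z_y^-+t$. The curve $\gamma_y$ is isometric, so $d(x,\iota(z_y^-))=t$ and $d(x,\iota(z_y^+))=\ell_y-t$. We also have $z_y^+-z_y^-=\ell_y v_\theta$, which gives $v\cdot(z_y^+-z_y^-)=\ell_y\cos\theta$ and $w\cdot(z_y^+-z_y^-)=\ell_y\sin\theta$. Since $u_v$ is $1$-Lipschitz with boundary values $v\cdot z$, it satisfies $v\cdot z_y^+-(\ell_y-t)\le u_v(x)\le v\cdot z_y^-+t$. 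Hence
\[
 -(\ell_y-t)(1-\cos\theta)\le u_v(x)-(v\cdot z_y^-+t\cos\theta)\le t(1-\cos\theta).
\]
This is already quadratic in $\theta$. Writing $A$ and $B$ for the analogous deviations of $u_v$ and $u_w$ from the linear interpolants $v\cdot z_y^-+t\cos\theta$ and $w\cdot z_y^-+t\sin\theta$, we get exactly
\[
D(x)=-(\cos\theta\,A+\sin\theta\,B).
\]

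\emph{Step 3 (main obstacle).} The naive endpoint bound on $B$ is only of order $\ell_y(1-\sin\theta)$. This is useless for small $\theta$, since the chord is almost orthogonal to $w$. So the $u_w$ term cannot be controlled chord by chord from the two endpoint constraints on $u_w$ alone. I would instead exploit the uniqueness \cref{lem:unique} through a competitor.

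On the chord, the combination $\cos\theta\,u_v+\sin\theta\,u_w$ has endpoint values forced to be $v_\theta\cdot z_y^\pm$. One should bound it from both sides using only the two Lipschitz inequalities for $u_v$ and $u_w$ at each endpoint, combined with the weights $\cos\theta,\sin\theta$. The trouble is that the Lipschitz constant of $\cos\theta\,u_v+\sin\theta\,u_w$ is a priori only $\cos\theta+\sin\theta$.

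First, I would try to replace $u_w$ by $u_{w_\theta}$ with $w_\theta=-\sin\theta\,v+\cos\theta\,w$. I would invert the rotation, so that $v=\cos\theta\,v_\theta-\sin\theta\,w_\theta$, and run the Step~2 estimate on chords in the directions $v$ and $v_\theta$. Using the exact values of $u_v$ and $u_{v_\theta}$ on their own chords, I would aim to obtain $|A|\le\ell_y(1-\cos\theta)$. I would then estimate $\sin\theta\,B$ by comparing $\sin\theta\,u_w$ with $u_{v_\theta}-\cos\theta\,u_v$, which is linear along the chord up to $\cos\theta\,|A|$. If that fails, I would aim directly at the two one-sided bounds $D\le L_C(1-\cos\theta)$ and $-D\le L_C(1-\cos\theta)$. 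For these I would choose, for each sign, the boundary point $z_y^-$ or $z_y^+$ at which the McShane--Whitney inequalities for $u_v$ and $u_w$ are used. The aim is that the linear terms in $t$ cancel exactly and only $t(1-\cos\theta)$ or $(\ell_y-t)(1-\cos\theta)$ survives. Finally I would use $\ell_y\le L_C$.
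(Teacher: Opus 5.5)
Your proposal has a genuine gap, which you flag yourself: nothing in it controls the term $\sin\theta\,u_w$, so the inequality is not proved. The routes in Step~3 do not close it. In particular, estimating $\sin\theta\,B$ by comparing $\sin\theta\,u_w$ with $u_{v_\theta}-\cos\theta\,u_v$ is circular, because that comparison is $D$ itself. The difficulty comes from your choice of slicing direction. On chords parallel to $v_\theta$ only $u_{v_\theta}$ is known exactly. Both $u_v$ and $u_w$ are then constrained only by endpoint Lipschitz inequalities, and for $u_w$ these are of first order in $\theta$, as you observe.

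The missing idea is that a slice curve lies in a fiber of the transverse map, so every component of $P$ is \emph{constant} along it, and you are free to choose which functions these components are. Slice along $v$, not $v_\theta$, and take $b_1=w$ in \eqref{eq:Pv}, so that $u_w$ is the first component of $P_v$. For almost every $y\in\operatorname{int}K_v$, the curve $\gamma_y$ of \cref{lem:chords} satisfies $\gamma_y([0,\ell_y])\subset P_v^{-1}(y)$. Hence
\[
 u_w(\gamma_y(t))=u_w(\iota(z_y^-))=w\cdot z_y^-,
\]
while $u_v(\gamma_y(t))=v\cdot z_y^-+t$ by \eqref{eq:chord-scalar}. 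So $\cos\theta\,u_v+\sin\theta\,u_w=v_\theta\cdot z_y^-+t\cos\theta$ exactly on the curve, and only $u_{v_\theta}$ needs to be estimated. Use its two endpoint inequalities $v_\theta\cdot z_y^+-(\ell_y-t)\le u_{v_\theta}(\gamma_y(t))\le v_\theta\cdot z_y^-+t$ together with $v_\theta\cdot(z_y^+-z_y^-)=\ell_y\cos\theta$. They give
\[
 -(1-\cos\theta)(\ell_y-t)\le D(\gamma_y(t))\le(1-\cos\theta)\,t ,
\]
which is your Step~2 computation with the roles of $v$ and $v_\theta$ exchanged. Since $0\le t\le\ell_y\le L_C$, your support argument from Step~1 then gives the bound on all of $X$. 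That argument uses \eqref{eq:full-mass}, the vanishing of slices outside $\operatorname{int}K_v$ from \eqref{eq:slice-masses}, continuity of $D$, and $X=\spt\mu$. Because $1-\cos\theta\ge0$ for every real $\theta$, this works for all $\theta$ at once, and the sign reductions of Step~1 are unnecessary.

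Your first Step~3 route is close to this. If you keep the $v_\theta$-chords but take $b_1=w_\theta$, then $u_{w_\theta}$ is constant on them. Your bound on $A$ then proves the statement for the orthonormal pair $(v_\theta,-w_\theta)$, which is the same assertion after relabeling. In either version, the two base functions of the pair must be the chord coordinate and a transverse coordinate.
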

\begin{proof}
Choose the transverse basis with $b_1=w$ in \eqref{eq:Pv}.
For almost every $y\in\operatorname{int}K_v$, take the isometric
curve $\gamma_y$ given by \cref{lem:chords} with $P=P_v$.
Equation \eqref{eq:chord-scalar} gives
\[
 u_v(\gamma_y(t))=v\cdot z_y^-+t.
\]
The image of $\gamma_y$ lies in $P_v^{-1}(y)$.
Since the first component of $P_v$ is $u_w$,
\begin{equation}\label{eq:transverse-constant}
 u_w(\gamma_y(t))=u_w(\iota(z_y^-))=w\cdot z_y^-.
\end{equation}

The function $u_{v_\theta}$ is $1$-Lipschitz and satisfies
$u_{v_\theta}(\iota(z))=v_\theta\cdot z$ on $\partial C$.
The $1$-Lipschitz bounds for $u_{v_\theta}$ at the two endpoints of
$\gamma_y$ are
\[
 v_\theta\cdot z_y^+-(\ell_y-t)
    \le u_{v_\theta}(\gamma_y(t))
    \le v_\theta\cdot z_y^-+t.
\]
Equations \eqref{eq:chord-scalar} and \eqref{eq:transverse-constant}
give
\[
 \cos\theta\,u_v(\gamma_y(t))+\sin\theta\,u_w(\gamma_y(t))
 =v_\theta\cdot z_y^-+t\cos\theta.
\]
Using $v_\theta\cdot(z_y^+-z_y^-)=\ell_y\cos\theta$, the differences
between the endpoint bounds and this identity are
\[
 \begin{aligned}
 &v_\theta\cdot z_y^+-(\ell_y-t)
      -(v_\theta\cdot z_y^-+t\cos\theta)\\
 &\qquad=\ell_y\cos\theta-\ell_y+t-t\cos\theta
       =-(1-\cos\theta)(\ell_y-t),\\
 &(v_\theta\cdot z_y^-+t)
      -(v_\theta\cdot z_y^-+t\cos\theta)=(1-\cos\theta)t.
 \end{aligned}
\]
Therefore
\begin{equation}\label{eq:angle-interval}
 \begin{aligned}
 -(1-\cos\theta)(\ell_y-t)
 &\le u_{v_\theta}(\gamma_y(t))
       -\cos\theta\,u_v(\gamma_y(t))
       -\sin\theta\,u_w(\gamma_y(t))\\
 &\le(1-\cos\theta)t.
 \end{aligned}
\end{equation}
As $0\le t\le\ell_y\le L_C$, \eqref{eq:angle-interval} bounds
the absolute error by $L_C(1-\cos\theta)$ on these curves.
By \eqref{eq:full-mass}, the bound holds $\mu$-almost everywhere.
The error function is continuous. Since $\spt\mu=X$, the bound
holds everywhere on $X$.
\end{proof}

\begin{proposition}\label{prop:rotation}
For every orthonormal pair $v,w$ and every real $\theta$,
\begin{equation}\label{eq:exact-rotation}
 u_{\cos\theta v+\sin\theta w}
      =\cos\theta\,u_v+\sin\theta\,u_w\quad\text{on }X.
\end{equation}
\end{proposition}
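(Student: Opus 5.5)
The plan is to bootstrap the quadratic estimate of \cref{prop:angle} into an exact identity. I will subdivide a rotation by $\theta$ into $N$ small rotations, apply \eqref{eq:angle} at each step around a moving orthonormal frame, and let $N\to\infty$. The quadratic error $1-\cos(\theta/N)=O(N^{-2})$, summed over $N$ steps, gives $O(1/N)$.

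\textbf{Setup.} Fix the orthonormal pair $v,w$ and write $v_\theta=\cos\theta\,v+\sin\theta\,w$. Define the continuous error functions
\[
 G(\theta)=u_{v_\theta}-\cos\theta\,u_v-\sin\theta\,u_w\in\Cont(X),\qquad
 H(\theta)=\bigl(G(\theta),G(\theta+\tfrac{\pi}{2})\bigr)\in\Cont(X;\R^2).
\]
We have $v_0=v$ and $v_{\pi/2}=w$, so $G(0)=0$ and $G(\pi/2)=0$, and hence $H(0)=0$. Since $v_{\theta+\pi}=-v_\theta$ and $u_{-v}=-u_v$, we also get $G(\theta+\pi)=-G(\theta)$.

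\textbf{One-step recursion.} For each $\theta$, the vectors $v_\theta$ and $v_{\theta+\pi/2}$ form an orthonormal pair. Moreover $v_{\theta+\varphi}=\cos\varphi\,v_\theta+\sin\varphi\,v_{\theta+\pi/2}$. Applying \cref{prop:angle} to this pair gives
\[
 u_{v_{\theta+\varphi}}=\cos\varphi\,u_{v_\theta}+\sin\varphi\,u_{v_{\theta+\pi/2}}+e_1,\qquad \|e_1\|_\infty\le L_C(1-\cos\varphi).
\]
Next apply \cref{prop:angle} to the orthonormal pair $v_{\theta+\pi/2}$, $v_{\theta+\pi}=-v_\theta$. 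Using $u_{-v_\theta}=-u_{v_\theta}$, this gives
\[
 u_{v_{\theta+\pi/2+\varphi}}=\cos\varphi\,u_{v_{\theta+\pi/2}}-\sin\varphi\,u_{v_\theta}+e_2,\qquad \|e_2\|_\infty\le L_C(1-\cos\varphi).
\]
The trigonometric terms $\cos(\cdot)u_v+\sin(\cdot)u_w$ satisfy the same rotation law exactly, by the addition formulas. Subtracting them yields
\[
 H(\theta+\varphi)=R_\varphi H(\theta)+(e_1,e_2),
\]
where $R_\varphi$ is the planar rotation acting pointwise on $\R^2$-valued functions. The error term satisfies $\|(e_1,e_2)\|_\infty\le\sqrt2\,L_C(1-\cos\varphi)$.

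\textbf{Iteration.} Pointwise rotation preserves the Euclidean supremum norm on $\Cont(X;\R^2)$. Starting from $H(0)=0$ and taking $N$ steps of size $\varphi=\theta/N$, the triangle inequality gives
\[
 \|H(\theta)\|_\infty\le N\sqrt2\,L_C\bigl(1-\cos(\theta/N)\bigr)\le \frac{\sqrt2\,L_C\,\theta^2}{2N}.
\]
Letting $N\to\infty$ gives $H(\theta)=0$. In particular $G(\theta)=0$, which is exactly \eqref{eq:exact-rotation}.

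The main point needing care is the second component of the recursion. It requires the identity $u_{-v}=-u_v$ established before \eqref{eq:initial-map}. It also requires noticing that \cref{prop:angle} applies to every orthonormal pair, in particular to the rotated frame, and not only to the original pair $v,w$. Once the pair $(G(\theta),G(\theta+\pi/2))$ is tracked together, the argument is a routine discrete Grönwall estimate with a quadratic per-step error.
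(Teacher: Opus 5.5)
Your proposal is correct and is essentially the paper's proof: your $H(\theta)$ is exactly $Z_N-R_\theta Z_0$ in the paper's notation, with $w_k=v_{kh+\pi/2}$. Your second-component step uses the pair $(v_{\theta+\pi/2},-v_\theta)$ together with $u_{-v}=-u_v$, as the paper does, and you get the same bound $L_C\theta^2/(\sqrt2 N)$; the only cosmetic difference is that you propagate the error recursively rather than telescoping $\sum_k R_h^{N-1-k}E_k$.
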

\begin{proof}
Fix $v,w,\theta$ and a positive integer $N$. Put $h=\theta/N$ and
\[
 v_k=\cos(kh)v+\sin(kh)w,\qquad
 w_k=-\sin(kh)v+\cos(kh)w,\qquad Z_k=(u_{v_k},u_{w_k}).
\]
Put
\[
 R_h=\begin{pmatrix}\cos h&\sin h\\-\sin h&\cos h\end{pmatrix}.
\]
Set $E_k=Z_{k+1}-R_hZ_k$.
Apply \cref{prop:angle} to $(v_k,w_k)$ for the first component of $E_k$.
For the second component,
\[
 w_{k+1}=\cos h\,w_k+\sin h(-v_k),\qquad u_{-v_k}=-u_{v_k}.
\]
Thus
\[
 \begin{aligned}
 E_k^{(2)}
 &=u_{w_{k+1}}-\cos h\,u_{w_k}+\sin h\,u_{v_k}\\
 &=u_{\cos h\,w_k+\sin h(-v_k)}
     -\cos h\,u_{w_k}-\sin h\,u_{-v_k}.
 \end{aligned}
\]
Apply \cref{prop:angle} to $(w_k,-v_k)$.
Each component is bounded in absolute value by $L_C(1-\cos h)$, so
\begin{equation}\label{eq:iteration-error}
 Z_{k+1}=R_hZ_k+E_k,\qquad
 \|E_k\|_\infty\le\sqrt2 L_C(1-\cos h).
\end{equation}
For $z\in\R^2$, $|R_hz|=|z|$. Thus
$\|R_hZ\|_\infty=\|Z\|_\infty$ for $Z\in\Cont(X;\R^2)$.
Since $R_h^N=R_\theta$, telescoping gives
\begin{align}
 Z_N-R_\theta Z_0&=\sum_{k=0}^{N-1}R_h^{N-1-k}E_k,\nonumber\\
 \|Z_N-R_\theta Z_0\|_\infty
 &\le\sqrt2 NL_C(1-\cos(\theta/N))
 \le\frac{L_C\theta^2}{\sqrt2N}.
 \label{eq:iteration-limit}
\end{align}
The functions $Z_N=(u_{v_\theta},u_{-\sin\theta v+\cos\theta w})$
and $R_\theta Z_0$ are independent of $N$. Hence \eqref{eq:iteration-limit}
gives $\|Z_N-R_\theta Z_0\|_\infty=0$, proving \eqref{eq:exact-rotation}.
\end{proof}

\begin{corollary}\label{cor:linearity}
The homogeneous extension
\begin{equation}\label{eq:homogeneous}
 U(a)=\begin{cases}|a|u_{a/|a|},&a\ne0,\\0,&a=0,
       \end{cases}
 \qquad U:\R^n\longrightarrow \Cont(X)
\end{equation}
is a real linear map.
\end{corollary}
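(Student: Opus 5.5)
We need to show $U(a+b)=U(a)+U(b)$ and $U(\lambda a)=\lambda U(a)$. The plan is to derive both from \cref{prop:rotation} together with the symmetry $u_{-v}=-u_v$ noted after \eqref{eq:extensions}.

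\emph{Homogeneity.} For $\lambda>0$ and $a\ne0$ we have $(\lambda a)/|\lambda a|=a/|a|$, so $U(\lambda a)=\lambda|a|u_{a/|a|}=\lambda U(a)$ directly from \eqref{eq:homogeneous}. For $\lambda<0$, write $\lambda a=|\lambda|(-a)$. Then $U(\lambda a)=|\lambda||a|u_{-a/|a|}=-|\lambda||a|u_{a/|a|}=\lambda U(a)$. The cases $\lambda=0$ or $a=0$ are trivial.

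\emph{Additivity.} Let $a,b\in\R^n$. If either vector is zero there is nothing to prove. If $a,b$ are linearly dependent, say $b=\lambda a$, then $a+b=(1+\lambda)a$ and homogeneity gives $U(a+b)=(1+\lambda)U(a)=U(a)+U(b)$. Otherwise $a,b$ span a plane $\Pi$. Fix an orthonormal basis $v,w$ of $\Pi$ and define a map on $\Pi$ by
\[
 \Phi(\alpha v+\beta w)=\alpha u_v+\beta u_w .
\]
This map is evidently linear in $(\alpha,\beta)$.

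The key claim is that $U=\Phi$ on $\Pi$. Every nonzero $c\in\Pi$ can be written as $c=|c|(\cos\theta\,v+\sin\theta\,w)$ for some $\theta$. Applying \cref{prop:rotation} gives
\[
 U(c)=|c|u_{\cos\theta v+\sin\theta w}=|c|(\cos\theta\,u_v+\sin\theta\,u_w)=\Phi(c).
\]
Since $a$, $b$ and $a+b$ all lie in $\Pi$, linearity of $\Phi$ yields $U(a+b)=U(a)+U(b)$.

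\emph{Expected difficulty.} I do not expect a real obstacle. The only points needing care are the following:
\begin{itemize}[label={}]
\item \cref{prop:rotation} holds for every real $\theta$, which covers all unit vectors in $\Pi$.
\item The identity $U=\Phi$ on $\Pi$ does not depend on the chosen orthonormal basis, since $U$ is defined intrinsically.
\item The degenerate case is handled by homogeneity via $u_{-v}=-u_v$.
\end{itemize}
Finally, $U$ takes values in $\Cont(X)$ because each $u_v$ is $1$-Lipschitz.
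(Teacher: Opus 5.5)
Your proof is correct and follows essentially the same route as the paper: both identify $U$ on a plane with orthonormal basis $v,w$ with the linear map $\alpha v+\beta w\mapsto\alpha u_v+\beta u_w$ via \cref{prop:rotation}. The difference is cosmetic. The paper places any two vectors, collinear or not, in a common plane using $n\ge2$ and reads off $U(ta+b)=tU(a)+U(b)$ at once, whereas you treat homogeneity and the collinear case separately through $u_{-v}=-u_v$, which incidentally also covers $n=1$.
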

\begin{proof}
For an orthonormal pair $v,w$ and $(r,s)\ne(0,0)$, choose
$\rho=\sqrt{r^2+s^2}$ and $\theta$ with
$(r,s)=\rho(\cos\theta,\sin\theta)$. Then
\[
 U(rv+sw)=\rho u_{\cos\theta v+\sin\theta w}
       \overset{\eqref{eq:exact-rotation}}{=}r u_v+s u_w.
\]
This also holds when $r=s=0$, since $U(0)=0$.
For $a,b\in\R^n$, choose a two-dimensional subspace containing them;
this is possible since $n\ge2$. In an orthonormal basis $v,w$ of this
subspace, write $a=rv+sw$ and $b=r'v+s'w$. For every $t\in\R$,
\[
 \begin{aligned}
 U(ta+b)&=U((tr+r')v+(ts+s')w)\\
       &=(tr+r')u_v+(ts+s')u_w\\
       &=t(ru_v+su_w)+(r'u_v+s'u_w)=tU(a)+U(b).
 \end{aligned}
\]
Thus $U$ is real linear.
\end{proof}

\section{Proof of the rigidity theorem}\label{sec:proof}
\begin{proof}[Proof of \cref{thm:main}]
Suppose first that $n\ge2$ and use $F$ from \eqref{eq:initial-map}.
By \cref{cor:linearity}, for every unit $v=\sum_jv_je_j$ and $x\in X$,
\begin{equation}\label{eq:dual-coordinate}
 u_v(x)=U(v)(x)=\sum_jv_j u_{e_j}(x)=v\cdot F(x).
\end{equation}
Consequently
\begin{align}
 |F(x)-F(x')|
 &=\sup_{|v|=1}|v\cdot(F(x)-F(x'))|\nonumber\\
 &\overset{\eqref{eq:dual-coordinate}}{=}
        \sup_{|v|=1}|u_v(x)-u_v(x')|
 \le d(x,x').\label{eq:vector-nonexpansion}
\end{align}
Equation \eqref{eq:initial-map} gives $F\circ\iota=\id_{\partial C}$.
Using \eqref{eq:main-input} and composition of pushforwards,
\begin{equation}\label{eq:map-boundary}
 \begin{aligned}
 F_\#\partial T&=F_\#\iota_\#\cur{\partial C}\\
              &=(F\circ\iota)_\#\cur{\partial C}\\
              &=\cur{\partial C}.
 \end{aligned}
\end{equation}
Since $\iota$ is an isometric embedding,
it preserves ordinary AK mass. Thus \eqref{eq:main-input} and
\eqref{eq:convex-current} give
\begin{equation}\label{eq:boundary-mass}
 \begin{aligned}
 \M(\partial T)&=\M(\iota_\#\cur{\partial C})\\
              &=\M(\cur{\partial C})\\
              &=\Haus^{n-1}(\partial C).
 \end{aligned}
\end{equation}

Set $Y=\set T$ and $f=F|_Y$. The mass concentration on $\set T$
and $X=\spt\mu$ give
\[
 \mu(X\setminus Y)=0,\qquad
 \mu(B_r(x)\cap Y)=\mu(B_r(x))>0
 \quad(x\in X,\ r>0).
\]
Thus $\overline Y=X$. Since $X$ is compact and hence complete,
$\widehat Y=X$, and $(Y,T)$ is an integral current space.
By \eqref{eq:vector-nonexpansion}, $\Lip(f)\le1$.
Its extension to $\widehat Y=X$ is $F$, so
$f_\#\partial T=F_\#\partial T=\cur{\partial C}$ by
\eqref{eq:map-boundary}.
Equations \eqref{eq:main-input} and \eqref{eq:boundary-mass} give
$\M(T)\le\Leb^n(C)$ and
$\M(\partial T)\le\Haus^{n-1}(\partial C)$, both with equality.
By \cref{thm:given-map}, $f$ is an isometry onto $C$.
For $x,x'\in X$, take $y_j,y_j'\in Y$ tending to $x,x'$. Then
\[
 |F(x)-F(x')|=\lim_j|f(y_j)-f(y_j')|
             =\lim_jd(y_j,y_j')=d(x,x').
\]
Since $Y$ is dense in $X$ and $F$ is continuous,
\[
 F(X)\subset\overline{F(Y)}=\overline C=C,\qquad
 C=f(Y)\subset F(X).
\]
Thus $F:X\to C$ is a surjective isometry.

Set $D=F_\#T-\cur C$. This is a normal $n$-current in $\R^n$ with
\[
 \spt D\subset F(X)\cup C\Subset\R^n.
\]
By \eqref{eq:push-boundary} and \eqref{eq:map-boundary},
\[
 \begin{aligned}
 \partial D&=F_\#\partial T-\partial\cur C\\
           &=\cur{\partial C}-\cur{\partial C}=0.
 \end{aligned}
\]
By \cite[Theorem~7.2(2)]{Lang}, $D$ has a Euclidean density
$b\in\mathrm{BV}_{loc}(\R^n)$ satisfying
\[
 D=[b],\qquad |Db|=\|\partial D\|=0.
\]
Since $\R^n$ is connected, $b=c$ almost everywhere for some constant $c$.
The identity $\|D\|=|b|\Leb^n$ and compactness of $\spt D$ force
$c=0$, so $D=0$. Hence
\begin{equation}\label{eq:current-recovery}
 F_\#T=\cur C.
\end{equation}
For the inverse isometry $I=F^{-1}:C\to X$,
\[
 I_\#\cur C=I_\#F_\#T=(I\circ F)_\#T=T.
\]
Also $F(\iota(z))=z$ for $z\in\partial C$, so $I|_{\partial C}=\iota$.

If $n=1$, write $C=[\alpha,\beta]$ with $\alpha<\beta$, and let
$a=\iota(\alpha)$, $b=\iota(\beta)$. Then
\[
 \partial T=\cur b-\cur a,\qquad
 \M(T)=\beta-\alpha=d(a,b).
\]
By \cref{lem:two-endpoints}, $T=\cur\gamma$ for an isometric
curve $\gamma:[0,\beta-\alpha]\to X$ from $a$ to $b$.
Since $\gamma$ is isometric,
$\|T\|=\gamma_\#(\Leb^1\llcorner[0,\beta-\alpha])$, so
$X=\spt T=\gamma([0,\beta-\alpha])$.
Define $I(t)=\gamma(t-\alpha)$. It is a surjective isometry extending
$\iota$, and \eqref{eq:curve-current} gives $I_\#\cur C=T$.
\end{proof}

\textbf{Acknowledgement:} The AI tool ChatGPT (OpenAI) was used in the preparation of this manuscript. The author has reviewed all AI-assisted content and takes full responsibility for the final manuscript.

\printbibliography[title={References}]
\end{document}